\theoremstyle{plain}
\theoremstyle{definition}
\theoremstyle{remark}
\begin{document}

\title{Radial expansion preserves hyperbolic convexity and radial contraction preserves spherical convexity. 
}
\subtitle{}


\author{Dhruv Kohli \and
        Jeffrey M. Rabin
}


\institute{Department of Mathematics\\
            University of California, San Diego\\
            La Jolla, CA 92093\\
            \email{\{dhkohli, jrabin\}@ucsd.edu}
            }

\date{February 2019}

\maketitle

\begin{abstract}
On a flat plane, convexity of a set is preserved by both radial expansion and contraction of the set about any point inside it. Using the Poincar\'e disk model of hyperbolic geometry, we prove that radial expansion of a hyperbolic convex set about a point inside it always preserves hyperbolic convexity. Using stereographic projection of a sphere, we prove that radial contraction of a spherical convex set about a point inside it, such that the initial set is contained in the closed hemisphere centred at that point, always preserves spherical convexity.
\keywords{Preserving hyperbolic and spherical convexity, Poincar\'e disk, stereographic projection, dilation, radial expansion and contraction.}
\end{abstract}


\section{Introduction}
Hyperbolic, spherical, and of course Euclidean convexity have been extensively studied as the constant curvature cases of geodesic convexity. Of course convexity is preserved by isometries in all cases, and by dilations as well in Euclidean geometry. In this work we define hyperbolic and spherical analogs of dilations and prove that they preserve convexity under appropriate hypotheses. See \cite{afbeardon1,afbeardon2,maminda} for the basic facts of hyperbolic and spherical geometry used here. Our notation is broadly consistent with these sources.

\medskip

\textbf{Euclidean Plane} The Euclidean plane is $\mathbb{C}$, the complex plane, with Euclidean metric $|dz|$ which is flat; that is, has curvature zero. The translation map
\begin{align}
    \tau_c(z)=z+c \nonumber
\end{align}
is a Euclidean isometry that moves the origin to $c$.

\medskip

\textbf{Euclidean dilations} Euclidean dilation about the origin with dilation factor $k>0$ is given by
\begin{align}
    \delta_{0,k}(z)=kz. \nonumber
\end{align}
Euclidean dilation about a point $c \in \mathbb{C}$ is then given by $\delta_{c,k}=\tau_c \circ \delta_{0,k} \circ \tau_{c}^{-1}$. Obviously, $\delta_{c,k}$ preserves convexity for all $c$ and for all $k>0$. This is what we generalize in hyperbolic and spherical geometry. This overall pattern of translation, dilation about the origin and then translation again will be used in this work to define dilation about an arbitrary point.

\medskip

\textbf{Hyperbolic plane} Let $\mathbb{D} \subseteq \mathbb{C}$ denote the Poincar\'e disk of unit radius with metric $2|dz|/(1-|z|^2)$ which has curvature $-1$. The hyperbolic distance of $z \in \mathbb{D}$ from the origin is given by
\begin{align}
    d(z) = 2\tanh^{-1}\left|z\right|.\nonumber
\end{align}
Given distinct $u,v \in \mathbb{D}$, there is a unique hyperbolic geodesic segment, denoted $[u,v]$, joining these points. Hyperbolic geodesics are arcs of Euclidean circles orthogonal to the unit circle, including Euclidean lines through the origin. A Euclidean circle $|z-a|=r$ is orthogonal to the unit circle $|z|=1$ if and only if $|a|^2=1+r^2$. The map
\begin{align}
    \tau_{c}^{h}(z) = \frac{z+c}{1+\bar{c}z} \nonumber
\end{align}
is the unique hyperbolic isometry that maps the origin to $c$ and has positive derivative at the origin; in fact, $(\tau_c^h)'(0)=1-|c|^2$.
\begin{definition}
    \label{rhconvex}
    A set $C \subseteq \mathbb{D}$ is hyperbolic convex (h-convex) if, for every $u,v \in C$, $[u,v]$ lies in $C$. Obviously, $C$ is h-convex if and only if $\tau_{c}^{h}(C)$ is h-convex for each $c \in \mathbb{D}$.
\end{definition}

\medskip

\textbf{Hyperbolic dilations} Given a direction $e^{i\theta}$, there is a unique hyperbolic geodesic ray $\gamma_0(\theta)$ emanating from the origin with tangent vector $e^{i\theta}$ at the origin. For $z\in\gamma_0(\theta)$, its dilated image is the unique point $\delta^h_{0,k}(z)=z'$ on this ray with $d(z')=kd(z)$. If $z=re^{i\theta}$, then $z'=r'e^{i\theta}$, where $r'=\tanh(k\tanh^{-1}r)$. Thus,
\begin{align}
    \label{rdeltah0}
    \delta^h_{0,k}(re^{i\theta})=\tanh(k\tanh^{-1}r)e^{i\theta}.
\end{align}
We abbreviate $\tau_c^h$ by $\tau_c$ to simplify notation. Then dilation about a point $c \in \mathbb{D}$ is given by $\delta^h_{c,k} = \tau_c \circ \delta^h_{0,k} \circ \tau^{-1}_c$.

\medskip

\textbf{Spherical plane} The spherical plane is the one-point compactification $\hat{\mathbb{C}} = \mathbb{C} \cup \{\infty\}$ of the complex plane with the metric $2|dz|/(1+|z|^2)$ which has curvature $+1$. The extended plane $\hat{\mathbb{C}}$ is isometric under stereographic projection to the unit sphere $\mathbb{S}$ in $\mathbb{R}^3$ with its metric as a subset of $\mathbb{R}^3$. A Euclidean disk $D$ or Euclidean half-plane $H$ in $\mathbb{C}$ is called a \textit{hemisphere} if its stereographic projection onto $\mathbb{S}$ is a hemisphere. A disk $D_s(c,r)$ is a hemisphere if and only if $1+|c|^2=r^2$. The spherical distance is
\begin{align}
    \hat{d}(z,w)=2\tan^{-1}\left|\frac{z-w}{1+\bar{w}z}\right|,\nonumber
\end{align}
with obvious changes if one of the points is $\infty$. Points $u,v \in \hat{C}$ are \textit{antipodal} when $v = -1/\bar{u}$; this is equivalent to $\hat{d}(u,v)=\pi$. If $u,v$ are not antipodal, then there is a unique spherical geodesic segment $[u,v]$ joining the points; it is the shorter arc of the unique great circle through the points. The map
\begin{align}
    \tau^s_c(z) = \frac{z+c}{1-\bar{c}z} \nonumber
\end{align}
is the unique spherical isometry that takes the origin to $c$ with positive derivative at the origin: $(\tau^{s}_c)'(0)=1+|c|^2>0$. It is a rotation when viewed as acting on the unit sphere.
\begin{definition}
    \label{rsconvex}
    A set $C \subseteq \hat{\mathbb{C}}$ is spherical convex (s-convex) if, for every $u, v \in C$, all spherical geodesic segments joining them lie in C. Naturally, $C$ is s-convex if and only if $\tau_{c}^{s}(C)$ is s-convex for each $c \in \mathbb{D}$.
\end{definition}
With this definition, $\hat{\mathbb{C}}$ is s-convex. If a s-convex set $C$ contains a pair of antipodal points, then the set must be $\hat{\mathbb{C}}$. If a s-convex set does not contain antipodal points then the set is contained in a hemisphere centred at $a$, $\overline{D}_s(a,\pi/2)=\{z \in \hat{\mathbb{C}}: \hat{d}(a,z)\leqslant \pi/2\}$ for some $a \in C$. We will always assume this is the case.

\medskip

\textbf{Spherical dilations} Given $e^{i\theta}$, there is a unique spherical geodesic ray $\gamma_0(\theta)$ emanating from the origin with the tangent vector $e^{i\theta}$ at the origin. For $z \in \gamma_0(\theta)$, its dilated image is the unique point $\delta^s_{0,k}(z)=z'$ on this ray with $\hat{d}(0,z')=k\hat{d}(0,z)$ provided $k\hat{d}(0,z) < \pi$. If $z=re^{i\theta}$, then $z'=r'e^{i\theta}$, where $r'=\tan(k\tan^{-1}r)$. Thus,
\begin{align}
    \label{rdeltas0}
    \delta^s_{0,k}(re^{i\theta})=\tan(k\tan^{-1}r)e^{i\theta}.
\end{align}

Again abbreviating $\tau^s_c$ by $\tau_c$, the spherical dilation about a point $c \in \hat{\mathbb{C}}$ is given by $\delta^s_{c,k} = \tau_c \circ \delta^s_{0,k} \circ \tau_c^{-1}$.

\medskip

For all three geometries, we refer to the dilation of a set as \textit{radial expansion} when $k\geqslant 1$ and as \textit{radial contraction} when $k\leqslant 1$. Henceforth, expansion always means radial expansion and contraction always means radial contraction, hyperbolic or spherical according to context.

\section{Results}
\label{sec:result}
It is clear that in Euclidean geometry, if a set $C$ is convex then irrespective of whether dilation is an expansion or a contraction, the dilated set $C'$ is also convex. In this work, we prove that if $C$ is h-convex then the set obtained by expansion of $C$ about a point in it is still h-convex. We also prove that if $C$ is s-convex then the set obtained by  contraction of $C$ about a point $c$ in $C$ is still s-convex, provided that the initial set $C$ is contained in $\overline{D}_s(c,\pi/2)$, the closed spherical half-plane centred at $c$. We prove the hyperbolic case first.

\begin{theorem}
    \label{thm1}
    Consider a h-convex set $C \subseteq \mathbb{D}$ and a point $c \in C$. Then for any $k\geqslant 1$, $\delta^h_{c,k}(C)$ is h-convex.
\end{theorem}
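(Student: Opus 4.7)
The plan is to reduce to $c=0$, then to show that the problem amounts to a radial inequality inside the image of a single hyperbolic triangle, and finally to verify that inequality by direct analysis.

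First, since $\tau_c = \tau_c^h$ is a hyperbolic isometry and $\delta_{c,k}^h = \tau_c \circ \delta_{0,k}^h \circ \tau_c^{-1}$, the set $\delta_{c,k}^h(C)$ is h-convex iff $\delta_{0,k}^h(\tau_c^{-1}(C))$ is, and $\tau_c^{-1}(C)$ is h-convex with $0 \in \tau_c^{-1}(C)$. So I may assume $c=0$ and $0 \in C$. Take $u',v' \in \delta_{0,k}^h(C)$ and write $u' = \delta_{0,k}^h(u)$, $v' = \delta_{0,k}^h(v)$ with $u,v \in C$. Because $C$ is h-convex and contains $0,u,v$, the filled h-triangle $T = T(0,u,v)$ lies in $C$, so $\delta_{0,k}^h(T) \subseteq \delta_{0,k}^h(C)$. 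The sides $[0,u]$ and $[0,v]$ of $T$ are Euclidean radii (hyperbolic geodesics through the origin) and are mapped onto the radii $[0,u']$ and $[0,v']$; the third side $[u,v]$ maps to a curve $\Gamma := \delta_{0,k}^h([u,v])$ from $u'$ to $v'$. Thus $\delta_{0,k}^h(T)$ is the curvilinear region bounded by $[0,u'], [0,v']$, and $\Gamma$, and it would suffice to show $[u',v'] \subseteq \delta_{0,k}^h(T)$.

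After rotating $\mathbb{D}$ about $0$ (a hyperbolic isometry commuting with $\delta_{0,k}^h$), I may assume $0 \leq \arg u < \arg v < \arg u + \pi$. The angular wedge $W := \{z : \arg u \leq \arg z \leq \arg v\}$ is h-convex, being the intersection of two h-half-planes bounded by diameters; hence both the h-geodesic $[u',v']$ and the image curve $\Gamma$ lie in $W$. The defining Euclidean circles of $[u,v]$ and $[u',v']$ do not enclose the origin, so $\arg$ varies monotonically along each arc, and each arc meets every radial ray in $W$ at a single point. Denoting those points by $p_{[u,v]}(\theta)$ and $p_{[u',v']}(\theta)$, and using that $\delta_{0,k}^h$ multiplies hyperbolic distance from $0$ exactly by $k$, the containment $[u',v'] \subseteq \delta_{0,k}^h(T)$ is equivalent to the radial inequality
\begin{equation*}
d\bigl(0,\,p_{[u',v']}(\theta)\bigr) \;\leq\; k\, d\bigl(0,\,p_{[u,v]}(\theta)\bigr), \qquad \theta \in [\arg u, \arg v].
\end{equation*}

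The main obstacle is proving this inequality. I would pass to the Klein disk model, where hyperbolic geodesics become Euclidean chords while $\delta_{0,k}^h$ still acts radially by $\phi_k(r) := \tanh(k\tanh^{-1} r)$; the Klein radial coordinates of $p_{[u,v]}(\theta)$ and $p_{[u',v']}(\theta)$ are then given by the elementary Euclidean intercept formula as explicit rational functions of $|u|,|v|$ (resp.\ $\phi_k(|u|),\phi_k(|v|)$) and the sines of the angular differences, and converting from Klein radii to hyperbolic distances via $d = \tanh^{-1}(r)$ turns the inequality into a concrete analytic statement about $\phi_k$. A short computation gives $\phi_k''(r) = 2k(1-\phi_k(r)^2)(r-k\phi_k(r))/(1-r^2)^2$, which is non-positive on $[0,1)$ for $k \geq 1$ because $\phi_k(r) \geq r$ there; hence $\phi_k$ is concave on $[0,1)$ with $\phi_k(0) = 0$, so $\phi_k(tr) \geq t\phi_k(r)$ for $t \in [0,1]$. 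This superhomogeneity already yields the inequality in the symmetric subcase $|u|=|v|$ (where the intercept formula simplifies to $|u|\cos(\Theta/2)/\cos\theta$ with $\Theta = \arg v - \arg u$), and I expect a similar analysis, comparing the intercepts before and after applying $\phi_k$, to close out the general case.
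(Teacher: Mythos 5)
Your setup is sound and is essentially the same reduction the paper makes: after normalizing $c=0$, everything comes down to comparing, along each ray $e^{i\lambda}$ in the wedge between $\arg u$ and $\arg v$, the point of the geodesic $[u',v']$ with the image under $\delta^h_{0,k}$ of the point of $[u,v]$ on that ray, i.e.\ to the radial inequality $d(0,p_{[u',v']}(\lambda)) \leqslant k\, d(0,p_{[u,v]}(\lambda))$. The gap is in the final step, and it is not a small one. Writing $\phi_k(r)=\tanh(k\tanh^{-1}r)$ and working in the Klein model as you propose, the intercept of the chord through $R_1e^{i\theta_1}$ and $R_2e^{i\theta_2}$ along the ray at angle $\lambda$ is the weighted harmonic mean $\left(\alpha/R_2+\beta/R_1\right)^{-1}$, where $\alpha=\sin(\lambda-\theta_1)/\sin(\theta_2-\theta_1)$, $\beta=\sin(\theta_2-\lambda)/\sin(\theta_2-\theta_1)$ and $\alpha+\beta\geqslant 1$. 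So the inequality you must prove is
\[
\left(\frac{\alpha}{\phi_k(R_2)}+\frac{\beta}{\phi_k(R_1)}\right)^{-1}\ \leqslant\ \phi_k\left(\left(\frac{\alpha}{R_2}+\frac{\beta}{R_1}\right)^{-1}\right).
\]
When $R_1=R_2$ this collapses to $t\,\phi_k(R)\leqslant\phi_k(tR)$ and your superhomogeneity argument (via $\phi_k''\leqslant 0$ and $\phi_k(0)=0$) does the job. But for $R_1\neq R_2$ this is a genuinely two-point statement about weighted harmonic means, and it does not follow from one-variable concavity of $\phi_k$. You would need something like convexity together with subhomogeneity of $y\mapsto 1/\phi_k(1/y)$ on $(1,\infty)$ (to handle both the harmonic-mean structure and the fact that $\alpha+\beta$ may exceed $1$), and neither of these is established or even stated in your proposal.

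The sentence ``I expect a similar analysis \dots to close out the general case'' is exactly where the entire difficulty of the theorem lives. The paper's proof devotes Lemma~\ref{lem3} to precisely this point: it shows, via a Cauchy--Schwarz estimate, that $s\mapsto\tanh^{-1}\bigl(1/(k_1\coth(u_1 s)+k_2\coth(u_2 s))\bigr)$ is concave for $s>0$, and then compares this concave function of $s=1/k$ with the linear function $s\tanh^{-1}r'$, using agreement at $s=0$ and $s=1$ to conclude the radial inequality for $s\in(0,1]$. Some substitute for that lemma is unavoidable. As written, your argument proves the theorem only for the symmetric configuration $|u|=|v|$; the general case is asserted, not proved.
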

\begin{proof}
    Because of the invariance of h-convexity and dilations under isometries of the hyperbolic metric, there is no loss of generality in assuming that $c = 0$. So, we prove that for any $k \geqslant 1$, $\delta^h_{0,k}(C)$ is h-convex when $C$ is h-convex and $0\in C$. Denote $\delta^h_{0,k}(C)$ by $C'$.

    \medskip
    
    The following is an outline of the proof. We take two arbitrary points $x_1', x_2' \in C'$. We then take an arbitrary point $x'$ on $[x_1',x_2']$. To show that $C'$ is h-convex, we must show that $x' \in C'$ (Definition (\ref{rhconvex})). The preimages of $x_1', x_2'$ and $x'$ under the map $\delta^h_{0,k}$ are then computed. We call these preimages $x_1, x_2$ and $x$. Since $x_1',x_2' \in C'$, therefore $x_1, x_2 \in C$. We must show that $x \in C$ which will then prove that $x' = \delta^h_{0,k}(x) \in C'$. To show that $x \in C$, we first find a point $\xi$ on $[x_1,x_2]$ which is on the same radial geodesic ray as $x$. Note that $\xi \in C$ because $C$ is h-convex. Then we show that $|\xi| \geqslant |x|$. Using this and the facts that $0, \xi \in C$ and $C$ is h-convex, we conclude that $x \in C$ and therefore $C'$ is h-convex.

    \medskip

    For convenience, denote $s = 1/k$, so $s \in (0,1]$. Consider two points $x_1', x_2' \in C'$ as follows,
    \begin{align}
    x_1' &= r_1'e^{i\theta_1},\nonumber\\
    x_2' &= r_2'e^{i\theta_2},\nonumber
    \end{align}
    where $r_1', r_2' \in (0,1)$. Without loss of generality, assume that $0 \leqslant  \theta_1 < \theta_2 < \pi$. For convenience, denote $\gamma_1 = \tanh^{-1}r_1'$ and $\gamma_2 = \tanh^{-1}r_2'$. Rewriting $x_1'$ and $x_2'$,
    \begin{align}
    x_1' &= \tanh(\gamma_1)e^{i\theta_1},\nonumber\\
    x_2' &= \tanh(\gamma_2)e^{i\theta_2}.\nonumber
    \end{align}
    Let $x'$ be a point on $[x_1',x_2']$. We can represent $x'$ as
    \begin{align}
    x' &= r'e^{i\lambda}, \nonumber
    \end{align}
    where $\lambda = \theta_1 + t(\theta_2 - \theta_1)$ for some $t \in (0,1)$. Later we will obtain the value of $r'$ in terms of $\gamma_1,\gamma_2,\theta_1,\theta_2$ and $\lambda$ (equation (\ref{atanhrp})). Using the inverse of $\delta^h_{0,k}$, (equation (\ref{rdeltah0})), we obtain $x_1, x_2, x$ from $x_1', x_2', x'$ as
    \begin{align}
    x_1 &= r_1e^{i\theta_1},\nonumber\\
    x_2 &= r_2e^{i\theta_2},\nonumber\\
    x &= re^{i\lambda},\nonumber
    \end{align}
    where
    \begin{align}
    \label{r_1}
    r_1 &= \tanh(s\tanh^{-1}r_1') = \tanh(\gamma_1s),\\
    \label{r_2}
    r_2 &= \tanh(s\tanh^{-1}r_2') = \tanh(\gamma_2s),\\
    \label{r}
    r &= \tanh(s\tanh^{-1}r').
    \end{align}
    Then $[x_1,x_2]$ will be an arc of a circle $K$ orthogonal to $\partial \mathbb{D}$ centred at $a = a_1 + ia_2$ with radius $R$. Since $x_1$ and $x_2$ lie on $K$ and $|a|^2=1+R^2$, we obtain,
    \begin{align}
    \label{a_1}
    a_1 &= \frac{(r_1^{-1}+r_1)\sin\theta_2 -(r_2^{-1}+r_2)\sin\theta_1}{2\sin(\theta_2-\theta_1)}\nonumber\\
    &= \frac{\coth(2\gamma_1s)\sin\theta_2 - \coth(2\gamma_2s)\sin\theta_1}{\sin(\theta_2-\theta_1)},\\
    \label{a_2}
    a_2 &= \frac{(r_2^{-1}+r_2)\cos\theta_1 -(r_1^{-1}+r_1)\cos\theta_2}{2\sin(\theta_2-\theta_1)}\nonumber\\
    &= \frac{\coth(2\gamma_2s)\cos\theta_1 - \coth(2\gamma_1s)\cos\theta_2}{\sin(\theta_2-\theta_1)}.
    \end{align}
    We used equations (\ref{r_1}, \ref{r_2}) and the identity $\tanh(\alpha) + \coth(\alpha) = 2\coth(2\alpha)$ to simplify the above expressions. 

    \medskip
    
    Since $x_1', x_2' \in C'$, we have $x_1,x_2 \in C$. Since $C$ is h-convex, every point on $[x_1,x_2]$ lies in $C$. There exists a point on $[x_1,x_2]$ in the direction $e^{i\lambda}$ of $x$. Let that point be $\xi$ and denote $\rho = |\xi|$. Then,
    \begin{align}
    \rho &= a_1\cos\lambda + a_2\sin\lambda -\sqrt{(a_1\cos\lambda  + a_2\sin\lambda)^2 - 1}.\nonumber
    \end{align}
    Note that the circle $K$ intersects the line passing through $0$ and $x$ at two points, one of which lies inside $\mathbb{D}$, the other outside. The above equation ensures that $\xi$ is the point of intersection which lies inside $\mathbb{D}$. Also, note that
    {
    \begin{align}
        a_1\cos\lambda + a_2\sin\lambda &= \frac{\coth(2\gamma_1s)\sin(\theta_2-\lambda) +\coth(2\gamma_2s)\sin(\lambda-\theta_1)}{\sin(\theta_2-\theta_1)} \nonumber\\
        &\geqslant \frac{\sin(\theta_2-\lambda) +\sin(\lambda-\theta_1)}{\sin(\theta_2-\theta_1)}\nonumber\\
        &\geqslant 1.\nonumber
    \end{align}
    }%
    Using $\tanh^{-1}(\alpha - \sqrt{\alpha^2-1}) = \frac{1}{2}\tanh^{-1}\left(\frac{1}{\alpha}\right)$, we get
    {
    \begin{align}
        \tanh^{-1}\rho &= \frac{1}{2}\tanh^{-1}\left(\frac{1}{a_1\cos\lambda  + a_2\sin\lambda }\right) .\nonumber
    \end{align}
    }%
    Substituting the values of $a_1$ and $a_2$ using equations (\ref{a_1}, \ref{a_2}), we get
    {\small
    \begin{align}
        \label{atanhrho}
        \tanh^{-1}\rho= \frac{1}{2}\tanh^{-1}\left(\frac{\sin(\theta_2-\theta_1)}{\coth(2\gamma_1s)\sin(\theta_2-\lambda) +\coth(2\gamma_2s)\sin(\lambda-\theta_1)}\right).
    \end{align}
    }%
    Also, note that $r = |x|$ and from equation (\ref{r}) we have $\tanh^{-1}r = s\tanh^{-1}r'$. Note that when $k = 1$ (equivalently $s=1$) we have $x_1' = x_1, x_2' = x_2$ and $x' = x = \xi$. So, by equating $\tanh^{-1}r$ and $\tanh^{-1}\rho$ at $s = 1$, we obtain
    {
    \begin{align}
        \label{atanhrp}
        \tanh^{-1}r' &= \frac{1}{2}\tanh^{-1}\left(\frac{\sin(\theta_2-\theta_1)}{\coth(2\gamma_1)\sin(\theta_2-\lambda) + \coth(2\gamma_2)\sin(\lambda-\theta_1)}\right).
    \end{align}
    }%
    Therefore,
    {
    \begin{align}
    \label{atanhr}
    \tanh^{-1}r &= \frac{s}{2}\tanh^{-1}\left(\frac{\sin(\theta_2-\theta_1)}{\coth(2\gamma_1)\sin(\theta_2-\lambda) + \coth(2\gamma_2)\sin(\lambda-\theta_1)}\right).
    \end{align}
    }%
    Using equations (\ref{atanhrho}, \ref{atanhr}), we show that $\rho \geqslant r$ by showing that $\tanh^{-1}\rho \geqslant \tanh^{-1}r$ (because $\rho, r \in (0,1)$). Note that $\tanh^{-1}r$ is linear in $s$ and $\tanh^{-1}\rho$ is concave in $s$ for all $s > 0$ (using Lemma (\ref{lem3}) in the Appendix). Also, note that in the limit $s \rightarrow 0$,  $\tanh^{-1}r = \tanh^{-1}\rho = 0$, and at $s = 1$, $\tanh^{-1}r = \tanh^{-1}\rho = \tanh^{-1}r'$. These constraints on $\tanh^{-1}r$ and $\tanh^{-1}\rho$ imply that $\tanh^{-1}\rho \geqslant \tanh^{-1}r$ for all $s \in (0,1]$.

    \medskip

    Therefore, $\rho \geqslant r$ and so $|\xi| \geqslant |x|$. Using this and the facts that $0, \xi \in C$, $x$ is on the same radial geodesic ray $[0,\xi]$, and $C$ is h-convex, we conclude that $x \in C$. So, $x' \in C'$ and $C'$ is h-convex.
\end{proof}

\medskip

The spherical case is similar except that the initial s-convex set is restricted to be contained in the spherical half-plane centred at the point about which the set is dilated.

\begin{theorem}
    \label{thm2}
    Consider a s-convex set $C \subseteq \hat{\mathbb{C}}$ and a point $c \in C$ such that $C \subseteq \overline{D}_s(c,\pi/2)$. Then for any $0 < k \leqslant 1$, $\delta^s_{c,k}(C)$ is s-convex.
\end{theorem}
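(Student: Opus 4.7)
The plan is to mirror the proof of Theorem \ref{thm1} with the appropriate spherical modifications. By the isometry invariance of s-convexity and spherical dilations, reduce to $c = 0$, so the hypothesis $C \subseteq \overline{D}_s(0,\pi/2)$ becomes $|z| \leq 1$ for every $z \in C$. Set $s = 1/k \geq 1$ and $C' = \delta^s_{0,k}(C)$. Take $x'_1,x'_2 \in C'$ and $x' \in [x'_1,x'_2]$, write $x'_j = r'_j e^{i\theta_j}$ with $0 \leq \theta_1 < \theta_2 < \pi$ and $x' = r'e^{i\lambda}$ with $\lambda \in (\theta_1,\theta_2)$, and invert (\ref{rdeltas0}) to obtain the preimages $x_j = \tan(s\gamma_j)e^{i\theta_j} \in C$ and $x = \tan(s\tan^{-1}r')e^{i\lambda}$, where $\gamma_j = \tan^{-1}r'_j$. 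The hemisphere hypothesis forces $r_j \leq 1$, i.e.\ $s\gamma_j \leq \pi/4$, and in particular $\cot(2s\gamma_j) \geq 0$.

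Next, the spherical geodesic $[x_1,x_2]$ lies on the unique great circle $K$ through $x_1,x_2$, whose center $a = a_1 + ia_2$ and radius $R$ satisfy $R^2 = |a|^2 + 1$. Expanding $|x_j - a|^2 = R^2$ and using $\tan\alpha - \cot\alpha = -2\cot(2\alpha)$ gives $a_1\cos\theta_j + a_2\sin\theta_j = -\cot(2s\gamma_j)$, the spherical counterpart of (\ref{a_1})--(\ref{a_2}) with each $\coth(\cdot)$ replaced by $-\cot(\cdot)$. The intersection $\xi = \rho e^{i\lambda}$ of $K$ with the radial ray in direction $e^{i\lambda}$ solves $\rho^2 - 2A\rho - 1 = 0$ with $A = a_1\cos\lambda + a_2\sin\lambda \leq 0$; the relevant positive root is $\rho = A + \sqrt{A^2+1}$, which automatically lies in $[0,1]$ precisely because $A \leq 0$. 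Applying the identity $\tan^{-1}(A + \sqrt{A^2+1}) = \pi/4 + \tfrac{1}{2}\tan^{-1}A$ then yields the spherical analog of (\ref{atanhrho}):
\[
\tan^{-1}\rho = \frac{\pi}{4} - \frac{1}{2}\tan^{-1}\!\left(\frac{\cot(2s\gamma_1)\sin(\theta_2-\lambda) + \cot(2s\gamma_2)\sin(\lambda-\theta_1)}{\sin(\theta_2-\theta_1)}\right).
\]
Setting $s = 1$ (where $x = x' = \xi$) identifies the right-hand side with $\tan^{-1}r'$, while the preimage relation keeps $\tan^{-1}r = s\tan^{-1}r'$ linear in $s$.

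The target $\rho \geq r$ for our fixed $s \geq 1$ thus becomes $\tan^{-1}\rho(s) \geq s\tan^{-1}r'$. As $s \to 0^+$, $\cot(2s\gamma_j) \to \infty$ drives $A \to -\infty$ and $\rho = A + \sqrt{A^2+1} \to 0$, so both $\tan^{-1}\rho$ and $\tan^{-1}r$ vanish at $s = 0$ and both equal $\tan^{-1}r'$ at $s = 1$. If $\tan^{-1}\rho(s)$ is \emph{convex} in $s$ throughout the admissible range $(0, \pi/(4\max(\gamma_1,\gamma_2))]$, then $g(s) = \tan^{-1}\rho(s) - s\tan^{-1}r'$ is convex with $g(0) = g(1) = 0$, which forces $g \leq 0$ on $[0,1]$ and $g \geq 0$ for all $s \geq 1$ in the admissible range. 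Hence $\rho \geq r$, so $x$ lies on the radial spherical geodesic $[0,\xi]$; since $0, \xi \in C$ and $C$ is s-convex, $x \in C$, and therefore $x' = \delta^s_{0,k}(x) \in C'$.

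The main obstacle is the convexity claim, which is the spherical counterpart of the concavity Lemma \ref{lem3} used in Theorem \ref{thm1}. Because $\cot$ is convex on $(0,\pi/2)$ but $\tan^{-1}$ is concave, the two terms in the second derivative of the displayed composition have competing signs, so establishing convexity of $\tan^{-1}\rho$ in $s$ (equivalently, concavity of $\tan^{-1}(\cot(2s\gamma_1)P + \cot(2s\gamma_2)Q)$ with nonnegative weights $P,Q$) requires a dedicated calculus argument analogous to, but independent of, the one carried out in the hyperbolic appendix; this is where I expect the bulk of the technical work to lie.
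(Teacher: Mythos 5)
Your setup reproduces the paper's argument step for step: reduce to $c=0$, pull $x_1',x_2',x'$ back under $\delta^s_{0,k}$, pass to the great circle $K$ through $x_1,x_2$ with $R^2=|a|^2+1$, locate $\xi=\rho e^{i\lambda}$ as the root $\rho=A+\sqrt{A^2+1}$ with $A=a_1\cos\lambda+a_2\sin\lambda\leqslant 0$, convert to $\tan^{-1}\rho$, identify the $s=1$ value with $\tan^{-1}r'$ so that $\tan^{-1}r=s\tan^{-1}r'$ is linear in $s$, and finish by comparing a convex function to a linear one agreeing at $s=0$ and $s=1$. All of these steps are correct and match the paper's equations (\ref{sa_1})--(\ref{atanr}); your identity $\tan^{-1}(A+\sqrt{A^2+1})=\pi/4+\tfrac12\tan^{-1}A$ is equivalent to the paper's $\tan^{-1}(\alpha+\sqrt{\alpha^2+1})=\tfrac12\tan^{-1}(-1/\alpha)$ for $\alpha\leqslant 0$, and your endpoint/convexity logic for $g(s)$ is sound.

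However, you have not proved the one claim on which everything rests: that $\tan^{-1}\rho(s)$ is convex in $s$ on $(0,s^*]$. You correctly flag it as the main obstacle, but flagging it is not closing it, and it is not a routine verification --- the naive second-derivative computation has competing signs, exactly as you observe. This is precisely the content of the paper's Lemma \ref{lem4}: convexity of $f(x)=\tan^{-1}\bigl(1/(k_1\cot(u_1x)+k_2\cot(u_2x))\bigr)$ for $k_1,k_2>0$ with $k_1+k_2\geqslant 1$ and $u_ix\leqslant\pi/2$, proved there by a Cauchy--Schwarz estimate on the numerator of $f''$ together with a monotonicity argument showing $(u_1p_1-u_2p_2)\bigl(u_2^2(1+p_2^2)-u_1^2(1+p_1^2)\bigr)\geqslant 0$. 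Two hypotheses of that lemma deserve explicit mention in your write-up, since they are where the geometric assumptions enter: the weights $P=\sin(\theta_2-\lambda)/\sin(\theta_2-\theta_1)$ and $Q=\sin(\lambda-\theta_1)/\sin(\theta_2-\theta_1)$ satisfy $P+Q\geqslant 1$, and the restriction $s\gamma_j\leqslant\pi/4$ (from $C\subseteq\overline{D}_s(0,\pi/2)$) is what keeps the arguments of $\cot$ in the range where the lemma's monotonicity facts hold. Until you supply a proof of the convexity claim, the argument is an accurate skeleton of the paper's proof rather than a complete proof.
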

\begin{proof}
    Because of the invariance of s-convexity and s-dilations under spherical isometries, there is no loss of generality in assuming that $c=0$. So, we prove that for any $k \in (0,1]$, $\delta^s_{0,k}(C)$ is s-convex when $C \subseteq \overline{\mathbb{D}} = \overline{D}_s(0,\pi/2)$ is s-convex and $0\in C$. Denote $\delta^s_{0,k}(C)$ by $C'$. Note that $C' \subseteq C \subseteq \overline{\mathbb{D}}$. The strategy of the proof is same as that of Theorem \ref{thm1}.

    \medskip

    For convenience, denote $s = 1/k$, so that $s \geqslant 1$. Later we will obtain an upper bound on $s$ based on the constraint $C \subseteq \overline{\mathbb{D}}$. Consider two points $x_1', x_2' \in C'$,
    \begin{align}
    x_1' &= r_1'e^{i\theta_1},\nonumber\\
    x_2' &= r_2'e^{i\theta_2},\nonumber
    \end{align}
    where $r_1', r_2' \in (0,1)$. Without loss of generality, assume that $0 \leqslant  \theta_1 < \theta_2 < \pi$. For convenience, denote $\gamma_1 = \tan^{-1}r_1'$ and $\gamma_2 = \tan^{-1}r_2'$. Rewriting $x_1'$ and $x_2'$,
    \begin{align}
    x_1' &= \tan(\gamma_1)e^{i\theta_1},\nonumber\\
    x_2' &= \tan(\gamma_2)e^{i\theta_2}.\nonumber
    \end{align}
    Let $x'$ be a point on $[x_1',x_2']$. We can represent $x'$ as
    \begin{align}
    x' &= r'e^{i\lambda}, \nonumber
    \end{align}
    where $\lambda = \theta_1 + t(\theta_2 - \theta_1)$ for some $t \in (0,1)$. Using the inverse of $\delta^s_{0,k}$ (equation (\ref{rdeltas0})), we obtain $x_1, x_2, x$ from $x_1', x_2', x'$:
    \begin{align}
    x_1 &= r_1e^{i\theta_1},\nonumber\\
    x_2 &= r_2e^{i\theta_2},\nonumber\\
    x &= re^{i\lambda},\nonumber
    \end{align}
    where
    \begin{align}
    \label{sr_1}
    r_1 &= \tan(s\tan^{-1}r_1') = \tan(\gamma_1s),\\
    \label{sr_2}
    r_2 &= \tan(s\tan^{-1}r_2') = \tan(\gamma_2s),\\
    \label{sr}
    r &= \tan(s\tan^{-1}r').
    \end{align}
    Since $x_1, x_2, x \in \overline{\mathbb{D}}$, therefore $r_1,r_2,r \in (0,1]$, or equivalently, their spherical distance from $0$ is less than or equal to $\pi/2$. Since $x$ lies on $[x_1,x_2]$, therefore the spherical distance of $x$ from $0$ is less than the spherical distance of either $x_1$ or $x_2$ from $0$. So we obtain the following constraint on $s$,
    \begin{align}
        2s\ \max(\gamma_1, \gamma_2)  &\leqslant \frac{\pi}{2}.\nonumber
    \end{align}
    Define $s^*$ as,
    \begin{align}
        s^* = \frac{\pi}{4}\min(\gamma_1^{-1}, \gamma_2^{-1}).\nonumber
    \end{align}
    Note that $s^* \geqslant 1$ and we have $s \in [1,s^*]$.

    \medskip

    Then $[x_1,x_2]$ will be an arc of a circle $K$ which intersects the unit circle at diametrically opposite points, and is centred at $a = (a_1,a_2)$ with radius $R$. Since $x_1$ and $x_2$ lie on $K$ and $1+|a|^2=R^2$, we obtain
    \begin{align}
    \label{sa_1}
    a_1 &= \frac{(r_1-r_1^{-1})\sin\theta_2 -(r_2-r_2^{-1})\sin\theta_1}{2\sin(\theta_2-\theta_1)}\nonumber\\
    &= -\frac{\cot(2\gamma_1s)\sin\theta_2 -\cot(2\gamma_2s)\sin\theta_1}{\sin(\theta_2-\theta_1)},\\
    \label{sa_2}
    a_2 &= \frac{(r_2-r_2^{-1})\cos\theta_1 -(r_1-r_1^{-1})\cos\theta_2}{2\sin(\theta_2-\theta_1)}\nonumber\\
    &= -\frac{\cot(2\gamma_2s)\cos\theta_1 -\cot(2\gamma_1s)\cos\theta_2}{\sin(\theta_2-\theta_1)}.
    \end{align}
    We used equations (\ref{sr_1}, \ref{sr_2}) and the identity $\cot(\alpha) - \tan(\alpha) = 2\cot(2\alpha)$ to simplify these expressions.

    \medskip
    
    Since $x_1', x_2' \in C'$, we have $x_1,x_2 \in C$. Since $C$ is s-convex, every point on $[x_1,x_2]$ lies in $C$. There exists a point on $[x_1,x_2]$ which has the same direction $e^{i \lambda}$ as $x$ because $\lambda \in (\theta_1, \theta_2)$. Let that point be $\xi$ and denote $\rho = |\xi|$. Then,
    \begin{align}
    \rho &= \sqrt{(a_1\cos\lambda  + a_2\sin\lambda)^2 + 1} + a_1\cos\lambda + a_2\sin\lambda.\nonumber
    \end{align}
    The above equation ensures that $\xi$ is the point of intersection which lies inside the unit circle. Also, note that
    {
    \begin{align}
        a_1\cos\lambda  + a_2\sin\lambda &= -\frac{\cot(2\gamma_1s)\sin(\theta_2-\lambda) +\cot(2\gamma_2s)\sin(\lambda-\theta_1)}{\sin(\theta_2-\theta_1)} \nonumber\\
        &\leqslant 0. \nonumber
    \end{align}
    }%
    
    Using $\tan^{-1}(\alpha + \sqrt{\alpha^2+1}) = \frac{1}{2}\tan^{-1}\left(\frac{-1}{\alpha}\right)$ when $\alpha \leqslant 0$, we get
    {
    \begin{align}
        \tan^{-1}\rho &= \frac{1}{2}\tan^{-1}\left(\frac{-1}{a_1\cos\lambda  + a_2\sin\lambda }\right). \nonumber
    \end{align}
    }%
    Substituting the values of $a_1$ and $a_2$ using equations (\ref{sa_1}, \ref{sa_2}) we get
    {
    \begin{align}
        \label{atanrho}
        \tan^{-1}\rho= \frac{1}{2}\tan^{-1}\left(\frac{\sin(\theta_2-\theta_1)}{\cot(2\gamma_1s)\sin(\theta_2-\lambda) +\cot(2\gamma_2s)\sin(\lambda-\theta_1)}\right).
    \end{align}
    }%
    Setting $s=1$ we obtain
    {
    \begin{align}
        \label{atanrp}
        \tan^{-1}r' &= \frac{1}{2}\tan^{-1}\left(\frac{\sin(\theta_2-\theta_1)}{\cot(2\gamma_1)\sin(\theta_2-\lambda) + \cot(2\gamma_2)\sin(\lambda-\theta_1)}\right).
    \end{align}
    }%
    Therefore,
    {
    \begin{align}
    \label{atanr}
    \tan^{-1}r &= \frac{s}{2}\tan^{-1}\left(\frac{\sin(\theta_2-\theta_1)}{\cot(2\gamma_1)\sin(\theta_2-\lambda) + \cot(2\gamma_2)\sin(\lambda-\theta_1)}\right).
    \end{align}
    }%
    Using equations (\ref{atanrho}, \ref{atanr}), we show that $\rho \geqslant r$ by showing that $\tan^{-1}\rho \geqslant \tan^{-1}r$ (because $\rho, r \in (0,1)$). Note that $\tan^{-1}r$ is linear in $s$ and $\tan^{-1}\rho$ is convex in $s$ for all $s \in (0,s^*]$ (using Lemma (\ref{lem4}) in the Appendix). Also, note that in the limit $s \rightarrow 0$,  $\tan^{-1}r = \tan^{-1}\rho = 0$, and at $s = 1$, $\tan^{-1}r = \tan^{-1}\rho = \tan^{-1}r'$. These constraints on $\tan^{-1}r$ and $\tan^{-1}\rho$ imply that $\tan^{-1}\rho \geqslant \tan^{-1}r$ for all $s \in [1,s^*]$.

    \medskip

    Therefore, $\rho \geqslant r$ and so $|\xi| \geqslant |x|$. As before this shows that $x' \in C'$ and $C'$ is s-convex.
\end{proof}

\medskip

We now provide examples which show that the hypotheses of our theorems are necessary.
\begin{itemize}
    \item  Contraction of a h-convex set about a point in it may not preserve h-convexity. Consider a hyperbolic geodesic $\gamma$ that does not contain the origin. Let $H$ be the closed hyperbolic half-plane determined by $\gamma$ that contains the origin. Consider $\delta^h_{0,k}(H)$, where $k \in (0,1)$. Note that $\delta^h_{0,k}(\gamma)$ is a curve in $H$ that has the same endpoints, say $a$ and $b$, on the circle as $\gamma$. If one selects two points on $\delta^h_{0,k}(\gamma)$ that are very near $a$ and $b$, respectively, then the hyperbolic geodesic through these points is very close to $\gamma$. Because $\delta^h_{0,k}(\gamma)$ lies in the interior of $H$, this hyperbolic geodesic must contain points outside $\delta^h_{0,k}(H)$, so $\delta^h_{0,k}(H)$ is not h-convex.
    \item Expansion or contraction of a h-convex or a s-convex set $C$ about a point outside it may not preserve h-convexity or s-convexity. This follows directly from the fact that in both hyperbolic and spherical geometries, dilation of a geodesic segment about a point outside it results in a segment which is not a geodesic, so simply take $C$ to be such a geodesic segment.
    \item Expansion of a s-convex set $C$ about $0$, where $0 \in C$, may not preserve s-convexity. Consider a geodesic segment $C=\gamma$ passing through $0$ and having length slightly less than $\pi$ with length approximately $\pi/2$ on either side of $0$. Clearly, $\gamma$ is s-convex. With a sufficiently large dilation factor $k \gg 1$, $\delta^h_{0,k}(\gamma)$ will be a geodesic containing at least two antipodal points. Since $\hat{\mathbb{C}}$ is the only s-convex set containing antipodal points, $\delta^s_{0,k}(\gamma)$ is not s-convex.
    \item Contraction of a s-convex set $C$ about $0$ when $C \not\subseteq \overline{\mathbb{D}}$ and $0 \in C$, may not preserve s-convexity. Consider the s-convex hull $C$ of the points $0$, $\tan(\frac{0.9\pi}{2}) e^{i\pi/6}$ and $\tan(\frac{0.9\pi}{2}) e^{i\pi/3}$. Clearly, $C$ is s-convex and $C \not\subseteq \overline{\mathbb{D}}$. We then take a dilation factor of $0.9$ and plot $C' = \delta^{s}_{0,0.9}(C)$ as well as the s-convex hull of $C'$ (Figure (\ref{fig1})). Clearly, the s-convex hull of $C'$ is not contained in $C'$. So, $C'$ is not s-convex.
\end{itemize}
\begin{figure*}[h]
\centering
\includegraphics[width=0.8\textwidth]{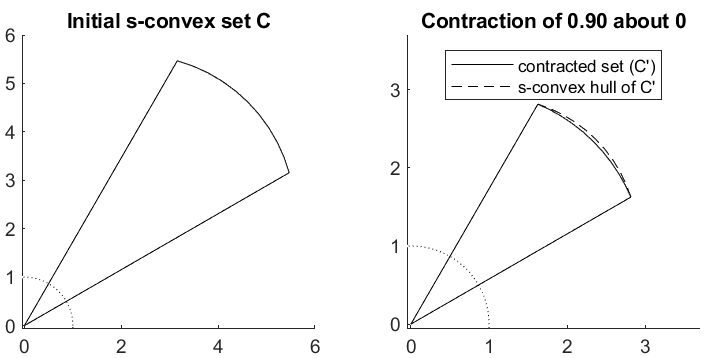}
\caption{Contraction of a s-convex set $C$ about $0$ when $C \not\subseteq \overline{\mathbb{D}}$ and $0 \in C$ resulting in a set which is not s-convex. The dotted quarter circle is the stereographic image of a hemisphere centred at the origin.}
\label{fig1}
\end{figure*}

\section{Conclusion and future work}
In this work, we showed that expansion of a hyperbolic convex set in the Poincar\'e disk about a point inside it results in a hyperbolic convex set while contraction may not. We also showed that contraction of a spherical convex set about a point inside it, such that the set is contained in the closed spherical half-plane centred at that point, results in a spherical convex set while expansion may not. This is in contrast to the case on a flat plane, where both contraction and expansion preserve convexity. Although not proved in this work, we conjecture that our results still hold for asymmetric dilation as well as in higher dimensions. For example, in the planar Euclidean case, asymmetric dilation means scaling by a diagonal matrix having unequal entries, say $k_1$ and $k_2$, so that a point 
$r e^{i \theta}$ maps to $r' e^{i \theta'}$ where 
$r' = r\sqrt{k_1^2\cos^2\theta +k_2^2\sin^2\theta}$ and  
$\tan \theta' = (k_2/k_1)\tan\theta$. Data from computer experiments supports these conjectures, which we hope to prove in our future work. 

\section{Conflict of interest statement}
On behalf of all authors, the corresponding author states that there is no conflict of interest. 

\begin{acknowledgements}
We would like to thank M. Xiao for several useful discussions that helped in defining scaling transformations in the Poincar\'e disk. We also thank the anonymous referee for generous comments that greatly improved this paper.\\

This is a post-peer-review, pre-copyedit version of an article published in Journal of Geometry. The final authenticated version is available online at: \href{https://doi.org/10.1007/s00022-019-0497-8}.
\end{acknowledgements}


\begin{thebibliography}{1}
\providecommand{\url}[1]{{#1}}
\providecommand{\urlprefix}{URL }
\expandafter\ifx\csname urlstyle\endcsname\relax
    \providecommand{\doi}[1]{DOI~\discretionary{}{}{}#1}\else
    \providecommand{\doi}{DOI~\discretionary{}{}{}\begingroup
    \urlstyle{rm}\Url}\fi

\bibitem{afbeardon1}
Beardon, A. F.: The Geometry of Discrete Groups.
\newblock Springer Verlag, New York, 1982.

\bibitem{afbeardon2}
Beardon, A. F., Minda, D.: The hyperbolic metric and geometric function theory, in Quasiconformal Mappings and their Applications, ed. by S. Ponnusamy, T. Sugawa and M. Vuorinen.
\newblock Narosa Publishing House, New Delhi, 2007, 9–56.

\bibitem{maminda} 
Ma, W., Minda, D.: Geometric properties of hyperbolic geodesics. In : Proceedings of the International Workshop on Quasiconformal Mappings and their Applications (2007).

\end{thebibliography}


\appendix\normalsize
\section*{Appendix}
\begin{lemma}
    \label{lem3}
    Let $f(x) = \tanh^{-1}\left(\frac{1}{k_1\coth(u_1x) + k_2\coth(u_2x)}\right)$. If $k_1,k_2 > 0,k_1 + k_2 \geqslant 1$ and $u_1,u_2 > 0$, then $f(x)$ is concave, that is, $f''(x) \leqslant 0$ for all $x > 0$.
\end{lemma}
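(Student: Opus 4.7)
The plan is to reduce the concavity of $f$ to a tractable algebraic inequality in $g := k_1\coth(u_1 x) + k_2 \coth(u_2 x)$ and its derivatives, and then establish that inequality by two applications of Cauchy--Schwarz, the first to eliminate the frequencies $u_i$ and the second to bring in the hypothesis $k_1 + k_2 \geq 1$.

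Since $\coth(y) > 1$ for $y > 0$, we have $g(x) > k_1 + k_2 \geq 1$ throughout, so $f = \coth^{-1}(g)$ is well defined. Using $(\coth^{-1})'(y) = 1/(1-y^2)$ together with $\coth'(y) = 1 - \coth^2(y)$, a routine differentiation shows that $f''(x) \leq 0$ is equivalent to $g''(g^2 - 1) \geq 2g(g')^2$. Introducing the abbreviations $y_i := \coth(u_i x) > 1$ and $v_i := k_i u_i(y_i^2 - 1) > 0$, one computes $g = k_1 y_1 + k_2 y_2$, $g' = -(v_1 + v_2)$, and $g'' = 2u_1 y_1 v_1 + 2 u_2 y_2 v_2$, so after dividing by $2$ the target inequality becomes
\begin{equation*}
(u_1 y_1 v_1 + u_2 y_2 v_2)(g^2 - 1) \geq g(v_1 + v_2)^2.
\end{equation*}

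The first Cauchy--Schwarz step, applied with $a_i = \sqrt{u_i y_i v_i}$ and $b_i = \sqrt{v_i/(u_i y_i)}$, gives
\begin{equation*}
(v_1 + v_2)^2 \leq (u_1 y_1 v_1 + u_2 y_2 v_2)\left(\frac{v_1}{u_1 y_1} + \frac{v_2}{u_2 y_2}\right).
\end{equation*}
The algebraic identity $v_i/(u_i y_i) = k_i y_i - k_i/y_i$ collapses the second factor to $g - (k_1/y_1 + k_2/y_2)$, which is positive since $y_i > 1$. Substituting into the target inequality and simplifying, it remains only to verify $g(k_1/y_1 + k_2/y_2) \geq 1$, which follows from a second Cauchy--Schwarz,
\begin{equation*}
(k_1 y_1 + k_2 y_2)\left(\frac{k_1}{y_1} + \frac{k_2}{y_2}\right) \geq (k_1 + k_2)^2 \geq 1,
\end{equation*}
where the final step invokes the hypothesis $k_1 + k_2 \geq 1$.

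The main obstacle is spotting the right Cauchy--Schwarz pairing so that the $u_i$-dependence washes out; once the identity $v_i/(u_i y_i) = k_i y_i - k_i/y_i$ is noticed, the intermediate quantity telescopes neatly to $g$, and the remaining inequality takes the familiar AM--HM form of Cauchy--Schwarz in which the hypothesis $k_1 + k_2 \geq 1$ enters cleanly. A small sign check, $g - (k_1/y_1 + k_2/y_2) > 0$, needs to be made before cross-multiplying, but nothing more delicate is required.
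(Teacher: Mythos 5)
Your proof is correct and is essentially the paper's own argument in different clothing: your first Cauchy--Schwarz with $a_i=\sqrt{u_iy_iv_i}$, $b_i=\sqrt{v_i/(u_iy_i)}$ is exactly the paper's bound on $f_1(x)^2$, and your final AM--HM step $(k_1y_1+k_2y_2)(k_1/y_1+k_2/y_2)\geqslant(k_1+k_2)^2\geqslant 1$ is the paper's use of $p_1p_2^{-1}+p_2p_1^{-1}\geqslant 2$. The only difference is organizational (you work with $g$ and the inequality $g''(g^2-1)\geqslant 2g(g')^2$ rather than with the quotient $f_1/f_2$), which is arguably a cleaner presentation of the same computation.
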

\begin{proof}
    For convenience, denote $p_i \equiv \coth(u_ix)$. Note that $p_i \geqslant 1$ for all $x \geqslant 0$ and $p_i' \equiv u_i(1-p_i^2)$. Then
    {
    \begin{align}
    f'(x) &= \frac{f_1(x)}{f_2(x)},\nonumber
    \end{align}
    }%
    where
    {
    \begin{align}
        f_1(x) &= k_1u_1(p_1^2-1) + k_2u_2(p_2^2-1),\nonumber\\
        f_2(x) &= (k_1p_1 + k_2p_2)^2-1.\nonumber
    \end{align}
    }%
    Note that $f_1(x), f_2(x) > 0$ for all $x > 0$. Next,
    {
    \begin{align}
        f''(x) &= \frac{f_1'(x)f_2(x)-f_1(x)f_2'(x)}{f_2(x)^2},\nonumber
    \end{align}
    }%
    where
    {
        \begin{align}
            f_1'(x) &= -2(k_1u_1^2p_1(p_1^2-1) + k_2u_2^2p_2(p_2^2-1)),\nonumber\\
            f_2'(x) &= -2(k_1p_1+k_2p_2)f_1(x).\nonumber
        \end{align}
    }%
    Note that $f_1'(x), f_2'(x) \leqslant 0$ for all $x > 0$. The denominator $f_2(x)^2$ is non-negative. We show that the numerator $f_1'(x)f_2(x)-f_1(x)f_2'(x)$ is non-positive. Using the Cauchy-Schwarz inequality, we have
    {\small
    \begin{align}
        f_1(x)^2 &= \left(u_1\sqrt{k_1p_1(p_1^2-1)} \cdot \sqrt{k_1(p_1 - p_1^{-1})} + u_2\sqrt{k_2p_2(p_2^2-1)}\cdot \sqrt{k_2(p_2-p_2^{-1})}\right)^2\nonumber\\
        &\leqslant (k_1u_1^2p_1(p_1^2-1) + k_2u_2^2p_2(p_2^2-1))(k_1(p_1-p_1^{-1})+k_2(p_2-p_2^{-1}))\nonumber\\
        &= -\frac{f_1'(x)}{2}(k_1(p_1-p_1^{-1})+k_2(p_2-p_2^{-1})).\nonumber
    \end{align}
    }%
    Substituting the above inequality in $f_1'(x)f_2(x)-f_1(x)f_2'(x)$, we get
    {
    \begin{align}
        &f_1'(x)f_2(x)-f_1(x)f_2'(x)\nonumber\\
        &\ \ = f_1'(x)f_2(x)+2(k_1p_1+k_2p_2)f_1(x)^2 \nonumber\\
        &\ \ \leqslant f_1'(x)(f_2(x)-(k_1p_1+k_2p_2)(k_1(p_1-p_1^{-1})+k_2(p_2-p_2^{-1}))) \nonumber\\
        &\ \ = f_1'(x)((k_1p_1+k_2p_2)^2 - 1 - (k_1p_1+k_2p_2)(k_1(p_1-p_1^{-1})+k_2(p_2-p_2^{-1}))) \nonumber\\
        &\ \ = f_1'(x)(k_1^2+k_2^2+k_1k_2(p_1p_2^{-1}+p_2p_1^{-1})- 1) \nonumber\\
        \label{exp1}
        &\ \ \leqslant  f_1'(x)((k_1+k_2)^2- 1)\\
        &\ \ \leqslant 0.  \nonumber
    \end{align}
    }%
    Inequality (\ref{exp1}) follows from the facts that $f_1'(x) \leqslant 0$ and $p_1p_2^{-1}+p_2p_1^{-1} \geqslant 2$ for all $x > 0$. The last inequality follows from $f_1'(x) \leqslant 0$ and $(k_1+k_2)^2 \geqslant 1$.
\end{proof}

\begin{lemma}
    \label{lem4}
    Let $f(x) = \tan^{-1}\left(\frac{1}{k_1\cot(u_1x) + k_2\cot(u_2x)}\right)$. If $k_1,k_2 > 0, k_1+k_2\geqslant 1$, and $u_1,u_2 > 0$, then $f(x)$ is convex, that is, $f''(x) \geqslant 0$ for all $x \in (0,x^*]$ where $x^* = \frac{\pi}{2}\min(u_1^{-1},u_2^{-1})$.
\end{lemma}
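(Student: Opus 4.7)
My plan is to mirror the proof of Lemma~\ref{lem3} step by step, with the spherical substitution $q_i \equiv \cot(u_i x)$ (which satisfies $q_i' = -u_i(1+q_i^2)$ and $q_i \geq 0$ on $(0, x^*]$) replacing the hyperbolic $p_i = \coth(u_i x)$. Writing $f(x) = \tan^{-1}(1/g(x))$ with $g = k_1 q_1 + k_2 q_2 \geq 0$, a direct differentiation gives $f' = \tilde f_1/\tilde f_2$ where $\tilde f_1 = k_1 u_1(1+q_1^2) + k_2 u_2(1+q_2^2) > 0$ and $\tilde f_2 = 1 + g^2 > 0$, together with $\tilde f_1' = -2[k_1 u_1^2 q_1(1+q_1^2) + k_2 u_2^2 q_2(1+q_2^2)] \leq 0$ and $\tilde f_2' = -2 g \tilde f_1 \leq 0$. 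Thus $f'' = (\tilde f_1' \tilde f_2 - \tilde f_1 \tilde f_2')/\tilde f_2^2$, and showing $f'' \geq 0$ reduces to the single inequality $2g\tilde f_1^2 \geq |\tilde f_1'|\tilde f_2$.

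Next, I would use the spherical identity $1+q_i^2 = q_i(q_i + q_i^{-1})$ (the counterpart of $p_i^2 - 1 = p_i(p_i - p_i^{-1})$ that powered Lemma~\ref{lem3}) to rewrite
\[
\tilde f_1 = \sum_i \sqrt{k_i u_i^2 q_i(1+q_i^2)} \cdot \sqrt{k_i(q_i+q_i^{-1})}
\]
and apply Cauchy--Schwarz to obtain $\tilde f_1^2 \leq (|\tilde f_1'|/2)\,N$ with $N = k_1(q_1+q_1^{-1}) + k_2(q_2+q_2^{-1})$. Expanding gives $gN = g^2 + k_1^2 + k_2^2 + k_1 k_2(q_1/q_2 + q_2/q_1)$, and combining $q_1/q_2 + q_2/q_1 \geq 2$ with the hypothesis $k_1+k_2 \geq 1$ yields $gN \geq g^2 + (k_1+k_2)^2 \geq \tilde f_2$. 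These are the direct spherical analogues of the two main estimates from Lemma~\ref{lem3}.

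The hard part is that, unlike Lemma~\ref{lem3} where these two estimates combined to make \emph{both} summands of $\tilde f_1' \tilde f_2 + 2g\tilde f_1^2$ nonpositive (yielding concavity), in the spherical setting the signs arrange themselves so that the naive decomposition produces one nonnegative and one nonpositive term and does not directly imply the target. To resolve this, I would expand the target inequality into a polynomial inequality in the parameters $b_i = k_i q_i \geq 0$ and $u_i > 0$; a direct computation shows $2g\tilde f_1^2 - |\tilde f_1'|\tilde f_2$ is a quadratic form in $(u_1 k_2,\, u_2 k_1)$ whose discriminant factors as $4 P_1 P_2 (1+g^2)\Gamma$ with $P_i = k_i^2 + b_i^2$ and
\[
\Gamma = b_1 b_2\bigl((k_1+k_2)^2 - 1\bigr) + (k_1 b_2 - k_2 b_1)^2 \geq 0,
\]
the non-negativity of $\Gamma$ using precisely the hypothesis $k_1+k_2 \geq 1$. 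The final step would be to conclude non-negativity of this quadratic form on the constrained admissible domain $\{(u_1,u_2): u_i x \leq \pi/2\}$, using the constraint $b_i = k_i \cot(u_i x)$ that links $b_i$ and $u_i$, combined with the discriminant sign and a careful sign analysis of the leading coefficients; equality in the target inequality is attained exactly when $k_1+k_2 = 1$ and $u_1 q_1 = u_2 q_2$, consistent with the boundary cases (e.g.\ $g = \cot(ux)$ with $k_1+k_2 = 1$, $u_1 = u_2 = u$, where $f(x) = ux$ is linear).
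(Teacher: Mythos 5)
Your setup, your observation that the Lemma~\ref{lem3}-style Cauchy--Schwarz bounds $\tilde f_1^2$ from \emph{above} and therefore points the wrong way for convexity, and your discriminant computation (I verified that $B^2-AC$ is a positive multiple of $\Gamma = b_1b_2\bigl((k_1+k_2)^2-1\bigr)+(k_1b_2-k_2b_1)^2$) are all correct. But the proof has a genuine gap at exactly the point where you stop: a \emph{nonnegative} discriminant means the quadratic form in $(u_1,u_2)$ is indefinite, i.e.\ it is \emph{not} positive semidefinite and takes negative values on an open cone of directions. (Indeed, if one treats $q_1,q_2\geqslant 0$ and $u_1,u_2>0$ as independent parameters, the target inequality $2g\tilde f_1^2\geqslant |\tilde f_1'|\tilde f_2$ is false -- e.g.\ $k_1=k_2=1/2$, $q_1=0$, $u_1\to 0$ gives a strictly negative value.) So everything hinges on showing that the particular point forced by the coupling $q_i=\cot(u_ix)$ with $u_ix\leqslant\pi/2$ avoids that negative cone, and ``a careful sign analysis of the leading coefficients'' is not an argument: when a diagonal coefficient is negative you must locate $u_1/u_2$ relative to the positive root of the parabola, which requires a quantitative relation between $u_1/u_2$ and $q_1,q_2$ that you never derive. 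Note also that up to this point your argument has used the domain restriction only through $q_i\geqslant 0$, whereas the full strength of $u_ix\leqslant\pi/2$ must enter somewhere.

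For comparison, the paper sidesteps the quadratic-form analysis entirely. It inserts the intermediate quantity $\bigl(k_1\sqrt{p_1^2+1}+k_2\sqrt{p_2^2+1}\bigr)^2$ between $f_2$ and the rest, applies Cauchy--Schwarz with the \emph{different} pairing $\sqrt{k_iu_i(p_i^2+1)}\cdot\sqrt{k_iu_i^{-1}}$ (not the Lemma~\ref{lem3} pairing you mirror), and reduces the first piece to the factored expression $\frac{k_1k_2}{u_1u_2}(u_1p_1-u_2p_2)\bigl(u_2^2(1+p_2^2)-u_1^2(1+p_1^2)\bigr)$, whose sign is controlled by the monotonicity of $\alpha\cot(k\alpha)$ and $\alpha^2(1+\cot^2(k\alpha))$ on $(0,\pi/2]$ -- this is precisely where the constraint $u_ix\leqslant\pi/2$ and the coupling through the common $x$ do their work; the second piece is handled by $\sqrt{(a^2+1)(b^2+1)}-ab\geqslant 1$ and $k_1+k_2\geqslant 1$. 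To complete your route you would need to supply an argument of comparable substance for the constrained positivity of your indefinite form; as written, the hardest step of the lemma is asserted rather than proved.
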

\begin{proof}
    For convenience, denote $p_i \equiv \cot(u_ix)$. Note that $p_i \geqslant 0$ for all $x \in (0,x^*]$ and $p_i' \equiv -u_i(1+p_i^2)$. Then
    {
    \begin{align}
    f'(x) &= \frac{f_1(x)}{f_2(x)}, \nonumber
    \end{align}
    }%
    where,
    {
    \begin{align}
    f_1(x) &= k_1u_1(1+p_1^2) + k_2u_2(1+p_2^2), \nonumber\\
    f_2(x) &= (k_1p_1 + k_2p_2)^2+1. \nonumber
    \end{align}
    }%
    Note that $f_1(x), f_2(x) > 0$ for all $x \in (0,x^*]$. Next,
    {
    \begin{align}
    f''(x) &= \frac{f_1'(x)f_2(x) - f_1(x)f_2'(x)}{f_2(x)^2},\nonumber
    \end{align}
    }%
    where
    {
    \begin{align}
        \label{f2}
        f_1'(x) &= -2\left(k_1u_1^2p_1(1+p_1^2) + k_2u_2^2p_2(1+p_2^2)\right), \nonumber\\
        f_2'(x) &= -2(k_1p_1+k_2p_2)f_1(x).
    \end{align}
    }%
    Note that $f_1'(x), f_2'(x) \leqslant 0$ for all $x \in (0,x^*]$. The denominator $f_2(x)^2$ is positive, so we will show that the numerator $f_1'(x)f_2(x) - f_1(x)f_2'(x)$ is also positive for all $x \in (0,x^*]$. First we show that
    {
    \begin{align}
        \label{scs2}
        f_1'(x)\left(k_1\sqrt{p_1^2+1} + k_2\sqrt{p_2^2+1}\right)^2 - f_1(x)f_2'(x) \geqslant 0.
    \end{align}
    }%
    Using the Cauchy-Schwarz inequality, we have
    {
    \begin{align}
        &\left(k_1\sqrt{p_1^2+1} + k_2\sqrt{p_2^2+1}\right)^2 \nonumber\\
        &\ \ = \left(\sqrt{k_1u_1(p_1^2+1)}\cdot \sqrt{k_1u_1^{-1}} + \sqrt{k_2u_2(p_2^2+1)}\cdot \sqrt{k_2u_2^{-1}}\right)^2 \nonumber\\
        &\ \ \leqslant  (k_1u_1(p_1^2+1) + k_2u_2(p_2^2+1))(k_1u_1^{-1}+k_2u_2^{-1}) \nonumber\\
        &\ \ = f_1(x)(k_1u_1^{-1}+k_2u_2^{-1}).\nonumber
    \end{align}
    }%
    Using the above inequality and equation (\ref{f2}) in equation (\ref{scs2}), we get
    {
    \begin{align}
        &f_1'(x)\left(k_1\sqrt{p_1^2+1} + k_2\sqrt{p_2^2+1}\right)^2 - f_1(x)f_2'(x)\nonumber\\
        \label{scs31}
        &\ \ \geqslant f_1'(x)f_1(x)(k_1u_1^{-1}+k_2u_2^{-1}) - f_1(x)f_2'(x)\\
        &\ \ = f_1'(x)f_1(x)(k_1u_1^{-1}+k_2u_2^{-1}) + 2(k_1p_1+k_2p_2)f_1(x)^2\nonumber\\
        &\ \ = f_1(x)(f_1'(x)(k_1u_1^{-1}+k_2u_2^{-1}) + 2(k_1p_1+k_2p_2)f_1(x))\nonumber\\
        \label{scs3}
        &\ \ = 2f_1(x)\left[\frac{k_1k_2}{u_1u_2}\left(u_1p_1-u_2p_2\right)\left(u_2^2(1+p_2^2) - u_1^2(1+p_1^2)\right)\right]\\
        &\ \ \geqslant 0 \nonumber
    \end{align}
    }%
    as required. We used the fact that $f_1'(x) \leqslant 0$ to obtain (\ref{scs31}). Equation (\ref{scs3}) follows by substitution. Since $x \in (0,x^*]$, we have $u_ix \leqslant \pi/2$. Also, note that $f_1(x),k_1,k_2,u_1,u_2 > 0$. Then the last inequality follows from the fact that $\alpha\cot(k\alpha)$ is decreasing in $\alpha$ and $\alpha^2(1+\cot(k\alpha)^2)$ is increasing in $\alpha$ for all $\alpha$ such that $\alpha > 0$ and $k\alpha \in (0,\pi/2]$, so that either both terms in the product $\left(u_1p_1-u_2p_2\right)\left(u_2^2(1+p_2^2) - u_1^2(1+p_1^2)\right)$ are non-positive or both non-negative.
    
    \medskip

    Finally, we substitute (\ref{scs2}) in $f_1'(x)f_2(x) - f_1(x)f_2'(x)$, to get
    {
    \begin{align}
        &f_1'(x)f_2(x) - f_1(x)f_2'(x)\nonumber\\
        &\ \ \geqslant f_1'(x)f_2(x) - f_1'(x)\left(k_1\sqrt{p_1^2+1} + k_2\sqrt{p_2^2+1}\right)^2\nonumber\\
        &\ \ = -f_1'(x)\left[-f_2(x) + \left(k_1\sqrt{p_1^2+1} + k_2\sqrt{p_2^2+1}\right)^2\right]\nonumber\\
        &\ \ = -f_1'(x)\left[\left(k_1\sqrt{p_1^2+1}+k_2\sqrt{p_2^2+1}\right)^2 - (k_1p_1+k_2p_2)^2-1\right]\nonumber\\
        &\ \ = -f_1'(x)\left[k_1^2+k_2^2+2k_1k_2\left(\sqrt{p_1^2+1}\sqrt{p_2^2+1}-p_1p_2\right) - 1\right]\nonumber\\
        \label{lem2geq1}
        &\ \ \geqslant -f_1'(x)(k_1^2+k_2^2+2k_1k_2 - 1) \\
        &\ \ = -f_1'(x)((k_1+k_2)^2 - 1)\nonumber\\
        &\ \ \geqslant 0.\nonumber
    \end{align}
    }%
    We used the fact that $\sqrt{a^2+1}\sqrt{b^2+1}-ab \geqslant 1$ to obtain (\ref{lem2geq1}).
\end{proof}

\end{document}